\definecolor{myblue}{RGB}{0,29,119}
\newtheorem{theorem}{Theorem}[section]
\newtheorem{proposition}[theorem]{Proposition}
\newtheorem{lemma}[theorem]{Lemma}
\theoremstyle{definition}
\newtheorem{definition}[theorem]{Definition}
\newtheorem{example}[theorem]{Example}
\newtheorem{remark}[theorem]{Remark}
\newtheorem*{theorem*}{Theorem}
\newtheorem{question}[theorem]{Question}
\newcommand{\Pp}{{\mathbb P}} 
\newcommand{\cI}{{\mathcal I}}
\newcommand{\cP}{{\mathcal P}}
\DeclareMathOperator{\pdim}{p\,\! dim}
\DeclareMathOperator{\indim}{in\,\! dim}
\DeclareMathOperator{\Hom}{Hom}
\DeclareMathOperator{\ext}{Ext}
\providecommand{\AMS}{$\mathcal{A}$\kern-.1667em%
\lower.25em\hbox{$\mathcal{M}$}\kern-.125em$\mathcal{S}$}
\begin{document}


\title[Exceptional Sequences and Idempotent Functions]{Exceptional Sequences and Idempotent Functions}
\begin{abstract}
We prove that there is a one to one correspondence between the following three sets: idempotent functions on a set of size $n$, complete exceptional sequences of linear radical square zero Nakayama algebras of rank $n$ and rooted labeled forests with $n$ nodes and height of at most one. Therefore, the number of exceptional sequences is given by the sum $\sum\limits^n_{j=1}\binom{n}{j}j^{n-j}$.
\end{abstract}

\author{Emre SEN}

\let\thefootnote\relax\footnotetext{MSC 2010:16G20, 05E10, 05C30\\ Keywords: Nakayama algebras, complete exceptional sequences, idempotent function, labeled forests\\
Contact: \href{mailto:emre-sen@uiowa.edu}{emre-sen@uiowa.edu} }

\maketitle

\section{Introduction}
An indecomposable module $M$ of mod-$\!A$ is called exceptional if $\Hom_A(M,M)\cong k$, $\ext^i_A(M,M)=0$ for $i\geq 1$ where $A$ is a finite dimensional algebra over algebraically closed field $k$.

The notion of exceptional sequences was introduced by  Gorodentsev and Rudakov \cite{goro} in the study of exceptional vector bundles over $\Pp^2$. This was carried into the quiver representation set up by Crawley-Boevey \cite{craw} and Ringel \cite{ringel2}. Studying exceptional sequences of either coherent sheaves and their derived categories or quiver representations and their combinatorics are important. There are many works in those directions such as  \cite{rudakov}, \cite{melt},\cite{len}, \cite{ralf},\cite{hille} to name a few.

Enumerative problems are also of interest. For instance, the number of complete exceptional sequences of hereditary algebras of types $\mathbb{A}_n$ and $\mathbb{D}_n$ are given by $(n+1)^{n-1}$ and $2(n-1)^n$ \cite{seidel}. Combinatorial interpretations of those can be found in the papers \cite{araya},\cite{ringel} and \cite{igusa} for instance.

Brief account of this work is: first we give a combinatorial description of exceptional sequences of certain Nakayama algebras by using certain functions. Then by using it, we count the number of exceptional sequences. Our main result is :

\begin{theorem}\label{maintheoremIntro}
There is a one to one correspondence between: idempotent functions from the set of size $n$ to itself and complete exceptional sequences of linear, radical square zero Nakayama algebra of rank $n$.
\end{theorem}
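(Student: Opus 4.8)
The plan is to turn the statement into a finite combinatorial problem and then exhibit an explicit bijection. First I would list the exceptional modules. Because the algebra $A$ is linear Nakayama with $\operatorname{rad}^2 A=0$, every indecomposable is uniserial of length at most two, so the indecomposables are exactly the simples $S_1,\dots,S_n$ and the length-two projectives $P_1,\dots,P_{n-1}$, where $[P_i]=[S_i]+[S_{i+1}]$ in $K_0$. Each $P_i$ is projective and satisfies $\Hom(P_i,P_i)=k$, hence is exceptional; using the minimal projective resolutions $\cdots\to P_{i+1}\to P_i\to S_i\to 0$ one checks $\Hom(S_i,S_i)=k$ and $\ext^{\ge 1}(S_i,S_i)=0$, so every simple is exceptional too. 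Thus there are precisely $2n-1$ exceptional modules.

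Second, I would compute $\Hom$ and $\ext^{\ge 1}$ for every ordered pair; all of this is explicit from the resolutions above. The outcome is: $\Hom(S_i,S_j)=k\,\delta_{ij}$ and $\ext^{m}(S_i,S_j)\ne 0$ only for $m=j-i\ge 1$; $\Hom(P_i,S_j)\ne 0$ iff $j=i$, $\Hom(S_j,P_i)\ne 0$ iff $j=i+1$, and $\Hom(P_i,P_j)\ne 0$ iff $i\in\{j,j+1\}$; and, crucially, $\ext^{\ge1}(S_i,P_j)=0$ for all $i,j$ (the two a priori nonzero terms in the relevant complex cancel), while $\ext^{\ge1}(P_i,-)=0$ since $P_i$ is projective. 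Encoding this as a precedence relation $Y\prec X$ whenever $X\ne Y$ and $\Hom(Y,X)\ne 0$ or $\ext^{\ge1}(Y,X)\ne 0$, an exceptional sequence is exactly an ordering of a set of modules in which no later term precedes an earlier one. The relation is generated by $P_i\prec S_i$, $S_{i+1}\prec P_i$, $P_{i+1}\prec P_i$, and $S_i\prec S_j$ for $i<j$; in particular the simples are forced into increasing order, the projectives into decreasing order, and $\{S_i,S_{i+1},P_i\}$ forms a $3$-cycle, so these three modules never occur together.

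Third, I would identify completeness with length $n$ and observe, as for any exceptional sequence, that the classes of the terms are linearly independent, hence a $\Zz$-basis of $K_0\cong\Zz^n$. Associating to $S_i$ the edge $\{0,i\}$ and to $P_i$ the edge $\{i,i+1\}$ in the fan graph with hub $0$ over the path through $1,2,\dots,n$, the underlying unordered set of a complete exceptional sequence is a spanning tree of this fan, equivalently an interval partition of $[n]$ together with a marked vertex in each interval; the linear extensions of the induced precedence order then account for the remaining orderings.

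Finally I would construct the bijection with idempotent functions, viewed through the intermediate model of rooted labeled forests of height at most one, that is, set partitions of $[n]$ with a marked root in each block (the rule leaf $\mapsto$ root recovers $f$). The marked intervals coming from the module set already encode a partition-with-roots, but only into intervals; the extra data carried by the chosen linear extension is precisely what upgrades an interval partition into an arbitrary set partition. I would make this explicit by an algorithm that reads a forest off an ordered sequence, together with an inverse that builds the ordered sequence from a forest, and then verify the two are mutually inverse. The hard part is exactly this last step: matching the ordering data to the non-interval (set-partition) structure of an idempotent function and proving the assignment is well defined and bijective; once that is in hand, counting the forests yields $\sum_{j=1}^{n}\binom{n}{j}j^{\,n-j}$ and completes the proof.
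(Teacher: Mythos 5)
Your preparatory analysis is essentially correct, and it recovers the structural content of the paper's Section~\ref{section2} in a different (and arguably cleaner) language: your Hom/Ext table agrees with Remark~\ref{consequencesofresolution}; your precedence relation generated by $P_i\prec S_i$, $S_{i+1}\prec P_i$, $P_{i+1}\prec P_i$ and $S_i\prec S_j$ ($i<j$) encodes Propositions~\ref{orderingofsimples} and~\ref{chainsareexceptional}; and your identification of the admissible unordered supports with spanning trees of the fan (interval partitions of $[n]$ with a marked vertex in each interval) is equivalent to Proposition~\ref{everyMisinchain}: the blocks of your trees, with their forced internal order, are exactly the paper's projective--injective chains. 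Where the paper proves the existence-and-uniqueness of chains by induction on $n$ plus a case analysis, you get the same conclusion from $K_0$-independence of the classes in an exceptional sequence plus a graphic-matroid argument, which is a genuinely different and attractive route \emph{to that point}.

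The problem is that the proposal stops exactly where the theorem begins. The statement to be proved is the one-to-one correspondence with idempotent functions, and that correspondence is never constructed: you describe it only as an algorithm you ``would make explicit,'' and you yourself flag it as ``the hard part.'' Nothing in the proposal defines the map, checks it is well defined, or proves bijectivity. Concretely, what is missing is the content of the paper's Section~\ref{section3}: (i) the map $\Phi$ sending $E_i$ to its own position if $E_i$ is simple, and to the position of the simple $S$ whose chain contains $E_i$ otherwise --- i.e., the blocks of the idempotent function are the \emph{position sets} of your tree blocks, rooted at the positions of the simples; and (ii) the inverse construction $\Gamma$: from the data $(a_m,b_m)$ of an idempotent function (the number of non-fixed elements of each block lying after, respectively before, its fixed point) one must show that the requirements that the chains be disjoint, exhaust $[n]$, and respect the increasing order of the simples produce a system of inequalities with a \emph{unique} solution for the indices $T_m$ of the simples, so that every idempotent function arises from exactly one complete exceptional sequence. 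In your language this means proving that the pair (spanning tree, linear extension) is determined by, and recoverable from, the position data alone; you assert this is what remains to be matched up, but you do not prove it, and counting spanning trees with linear extensions does not by itself yield $\sum_{j=1}^{n}\binom{n}{j}j^{n-j}$ without that argument. As it stands you have a correct description of one side of the would-be bijection, not the bijection.
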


We recall an old result which appears in \cite{idempotent}: the enumeration of idempotent functions is equivalent to the enumeration of forests of rooted labeled trees of height at most one. We use this equivalence to relate all three:

\begin{theorem} The following are equivalent:
\begin{enumerate}[label=\roman*)]
\item the enumeration of complete exceptional sequences of linear radical square zero Nakayama algebras
\item the enumeration of idempotent functions
\item the enumeration of forests of rooted labeled trees of height at most one.
\end{enumerate}
\end{theorem}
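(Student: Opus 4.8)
The plan is to establish the three-way equivalence by exhibiting explicit bijections between consecutive pairs and then composing them. The heart of the matter is already supplied: Theorem \ref{maintheoremIntro} provides a one-to-one correspondence between complete exceptional sequences of the relevant Nakayama algebras and idempotent functions on an $n$-element set, giving the equivalence of (i) and (ii). For the equivalence of (ii) and (iii), the plan is to invoke the classical bijection recorded in \cite{idempotent} between idempotent functions on $\{1,\dots,n\}$ and forests of rooted labeled trees of height at most one.

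The natural way to make the (ii)$\Leftrightarrow$(iii) correspondence concrete is through the structure of an idempotent function. First I would observe that a function $f\colon \{1,\dots,n\}\to\{1,\dots,n\}$ satisfies $f\circ f=f$ precisely when its image consists entirely of fixed points; that is, $f(x)=x$ for every $x$ in the image of $f$. Next I would read off the corresponding forest: the fixed points of $f$ become the roots, and each non-fixed element $x$ becomes a leaf attached to the root $f(x)$. Since every non-fixed point maps directly to a fixed point, no chain of length greater than one can occur, so each tree has height at most one; conversely, a height-at-most-one forest specifies exactly such an $f$ by sending each root to itself and each leaf to its unique parent. This is a manifest inverse pair, so the correspondence is a bijection.

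Having both bijections in hand, I would conclude by composing: the enumeration of (i) equals that of (ii) by Theorem \ref{maintheoremIntro}, and the enumeration of (ii) equals that of (iii) by the bijection above, so all three enumeration problems coincide. I expect the only genuine obstacle to be formulating the height-at-most-one condition carefully and verifying the inverse correspondence is well defined in both directions; the idempotency condition $f\circ f=f$ translates so directly into the forest structure that the verification, while requiring attention, is essentially routine.
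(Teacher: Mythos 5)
Your proposal is correct and follows essentially the same route as the paper: the equivalence of (i) and (ii) is delegated to Theorem \ref{maintheoremIntro}, and the equivalence of (ii) and (iii) is the classical correspondence (fixed points as roots, non-fixed elements as leaves attached to their images) that the paper simply cites from \cite{idempotent} rather than writing out. Spelling out that classical bijection explicitly, as you do, is a harmless elaboration, not a different argument.
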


We give the answer to the enumeration problem of exceptional sequences by using the equivalences in the previous theorem:
\begin{theorem}\label{enumeration}
The number of complete exceptional sequences of linear, radical square zero Nakayama algebra of rank $n$ is given by the sum $\sum\limits^n_{j=1}\binom{n}{j}j^{n-j}$
\end{theorem}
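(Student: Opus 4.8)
The plan is to reduce the enumeration to a direct combinatorial count via Theorem~\ref{maintheoremIntro}. Since that theorem establishes a one-to-one correspondence between complete exceptional sequences of the linear radical square zero Nakayama algebra of rank $n$ and idempotent functions on a set of size $n$, it suffices to count the latter. Writing $[n]=\{1,\dots,n\}$, I would enumerate the functions $f\colon[n]\to[n]$ satisfying $f\circ f=f$.

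The key step is the structural observation that for an idempotent $f$ the image coincides with the fixed-point set: if $y=f(x)$ lies in $\im f$ then $f(y)=f(f(x))=f(x)=y$, so every element of the image is fixed; conversely, if $S=\im f$ consists of fixed points and $f$ maps $[n]$ into $S$, then $f\circ f=f$ automatically. Hence an idempotent function is determined precisely by a choice of image $S\subseteq[n]$ together with an arbitrary map from $[n]\setminus S$ into $S$, the elements of $S$ being forced to map to themselves.

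I would then organize the count according to $j=|S|$. For each fixed $j$ with $1\le j\le n$, there are $\binom{n}{j}$ ways to select the image $S$; its $j$ elements are fixed; and each of the remaining $n-j$ elements may be sent to any of the $j$ elements of $S$, contributing $j^{\,n-j}$ choices. Summing over $j$ yields
\[
\sum_{j=1}^{n}\binom{n}{j}\,j^{\,n-j},
\]
which is the claimed formula.

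Because the combinatorial step is elementary, there is no genuine obstacle here; the entire content of the result rests on the correspondence of Theorem~\ref{maintheoremIntro}. As a consistency check one could instead pass through the equivalence theorem together with the result of \cite{idempotent}, counting forests of rooted labeled trees of height at most one: such a forest on $n$ nodes with $j$ roots is specified by choosing the $j$ roots in $\binom{n}{j}$ ways and attaching each of the remaining $n-j$ nodes to one of the roots in $j^{\,n-j}$ ways, giving the same sum.
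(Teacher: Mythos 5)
Your proposal is correct and follows essentially the same route as the paper: invoke the bijection of Theorem~\ref{maintheoremIntro} and then count idempotent functions by the number $j$ of fixed points, giving $\binom{n}{j}$ choices of fixed-point set and $j^{\,n-j}$ maps of the remaining elements into it. The only difference is that you prove the elementary structural fact (image equals fixed-point set) in full, whereas the paper gives a sketch and defers the details to \cite{idempotent}; this makes your version self-contained but does not change the argument.
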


The organization of the paper is:
in the following section \ref{section2}  we state some  properties of exceptional sequences of Nakayama algebras. In section \ref{section3}, by using results of section \ref{section2}, we will assign an idempotent function to each exceptional sequence and vice versa.
This will be the proof of the main theorem \ref{maintheoremIntro}. In the last section \ref{section4}, we will investigate relationship between exceptional sequences and certain forests. Possible future directions will be given there. The list complete exceptional sequences for $n=4$ which is $41$ is given at the end of the paper.

\section{Exceptional Sequences of Nakayama Algebras}\label{section2}

\subsection{Nakayama Algebras}
Let $\Lambda_n$ be radical square zero Nakayama algebra of rank $n$ with oriented quiver $Q$:\\

\begin{center}
\begin{tikzpicture}
\draw[thick,->] (0,0) node{$\bullet$} --(0.9,0); 
\draw[thick,->] (1,0) node{$\bullet$} --(1.9,0); 
\draw[dashed,->] (2,0) node{$\bullet$} --(4,0); 
\draw[thick,->] (4,0) node{$\bullet$} --(4.9,0); 
\draw[thick] (5,0) node{$\bullet$} ;
\draw (0,-0.4) node{$1$} ;
\draw (1,-0.4) node{$2$} ;
\draw (2,-0.4) node{$3$} ;
\draw (4,-0.4) node{$n\!\!-\!\!1$} ;
\draw (5,-0.4) node{$n$} ;
\end{tikzpicture}
\end{center}

 We denote the simple $\Lambda_n$ module at vertex $i$ of $Q$ by $[i]$ or $S_i$ and the nonsimple indecomposable projective $\Lambda_n$ module starting at vertex $i$ by $[i,i+1]$. For a given module $X$, $P(X)$, $I(X)$, $\Omega^i(X)$ and $\Sigma^i(X)$ means projective cover, injective envelope, the $i$th syzygy of the projective resolution of $X$ and the $i$th syzygy of the injective resolution of $X$ respectively.
Throughout the text all modules are over $\Lambda_n$ for some $n$ if algebra is not specified.\\
The algebra $\Lambda_n$ has the following Auslander-Reiten quiver:
\begin{center}\label{ARquiver}
\begin{tikzpicture}
\draw (0,0) node {$\bullet$}--(1,1) node {$\bullet$}--(2,0) node {$\bullet$}-- (3,1)  node {$\bullet$} --  (4,0) node {$\bullet$} (5,0.5) node {$\ldots$}  (6,1) node {$\bullet$} --(7,0)node {$\bullet$}--(8,1)node {$\bullet$} -- (9,0) node {$\bullet$};
\draw (0,-0.5) node {$[n]$};
\draw (2,-0.5) node {$[n-1]$};
\draw (4,-0.5) node {$[n-2]$};
\draw (7,-0.5) node {$[2]$};
\draw (9,-0.5) node {$[1]$};
\draw (0.8,1.5) node {$[n\!-\!1,n]$};
\draw (3.1,1.5) node {$[n\!-\!2,n\!-\!1]$};
\draw (8,1.5) node {$[1,2]$};
\end{tikzpicture}
\end{center}

Notice that $[1]$ is a simple injective module and $[n]$ is a simple projective module. 

Consider the projective resolution of module $[1]$:

\begin{align}\label{resolution}
\xymatrixcolsep{5pt}
\xymatrix{& [n-1,n]\ar[rr]\ar[rd]     && \ldots   && [2,3]\ar[rr]\ar[rd]   && [1,2]\ar[rr] && [1]\\
[n]\ar[ru] && [n-1] \ar[ru] && \ldots\ar[ru] && [2]\ar[ru]
}
\end{align}
Indeed, the injective resolution of the module $[n]$ gives the same diagram. 
\begin{remark}\label{consequencesofresolution} Resolution \ref{resolution} has immediate consequences:
\begin{itemize}
\item Indecomposable modules are exceptional. 
\item Global dimension of $\Lambda_n$ is $n-1$.
\item For distinct $x$ and $y$, $\Hom_{\Lambda_n}([x,x+1],[y,y+1])\neq 0$ if and only if $x=y+1$.
\item  For given $1\leq x<y\leq n$, $\ext^{i}_{\Lambda_n}([x],[y])\neq 0$ if and only if $i=y-x$.
\item  If $1\leq x<y\leq n$, then $\ext^{i}_{\Lambda_n}([y],[x])= 0$ for all $1\leq i$.
\end{itemize}

\end{remark}

\subsection{Exceptional Sequences}

 A pair of modules $(M,N)$ is called an exceptional pair if $\Hom(N,M)=0$, $\ext^{i}(N,M)=0$, $i\geq 1$ for exceptional $\Lambda_n$ modules $M,N$. A sequence of $\Lambda_n$ modules $(M_1,\ldots,M_t)$ is called an exceptional sequence if for all $1\leq i<j\leq t$, $(M_i,M_j)$ is an exceptional pair. Moreover it is called complete if $t=n$.  Here we want to emphasize that elements of exceptional sequences are isomorphism classes of $\Lambda_n$ modules.

\begin{proposition}\label{orderingofsimples}
Let $E$ be an exceptional sequence, containing two simples $S_x$ and $S_y$. Only one of $(S_x,S_y)$ or $(S_y,S_x)$ can be an exceptional pair.
\end{proposition}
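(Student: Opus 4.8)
The plan is to reduce the whole statement to the two Ext computations between simple modules recorded in Remark \ref{consequencesofresolution}, exploiting the asymmetry they exhibit. First I would fix notation by assuming without loss of generality that $x<y$; since the roles of $S_x$ and $S_y$ can be swapped, this costs nothing. The goal then becomes to show that $(S_x,S_y)$ is an exceptional pair while $(S_y,S_x)$ is not, which in fact proves the slightly stronger fact that \emph{exactly} one of the two orderings is exceptional.

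Next I would dispose of the Hom conditions. By definition a pair $(M,N)$ is exceptional when $\Hom(N,M)=0$ and $\ext^i(N,M)=0$ for all $i\geq 1$. Because $S_x$ and $S_y$ are distinct simple modules, every homomorphism between them in either direction vanishes, so the Hom part of the exceptional-pair condition is automatic for both orderings, and only the Ext conditions remain to be analyzed.

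The heart of the argument is then a direct appeal to the last two bullets of Remark \ref{consequencesofresolution}. For the ordering $(S_x,S_y)$ the relevant vanishing is $\ext^i([y],[x])=0$ for all $i\geq 1$, which is exactly the fifth bullet since $x<y$; hence $(S_x,S_y)$ is an exceptional pair. For the reversed ordering $(S_y,S_x)$ the condition to verify is $\ext^i([x],[y])=0$ for all $i\geq1$, but the fourth bullet gives $\ext^{y-x}([x],[y])\neq 0$, and $y-x\geq 1$, so this condition fails. Therefore $(S_y,S_x)$ is not an exceptional pair, and the proposition follows.

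I expect no serious obstacle here, since the entire content is already packaged in the syzygy/cosyzygy computation of Resolution \ref{resolution}. The only point requiring care is the bookkeeping of which module occupies the first versus the second slot of the pair, that is, matching the definition's convention $\Hom(N,M),\ext^i(N,M)$ to the correct direction along the linear quiver; getting this matching right is exactly what pins down which of the two orderings survives.
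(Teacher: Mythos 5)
Your proof is correct and follows essentially the same route as the paper's, which likewise deduces the claim from the Ext asymmetry in Remark \ref{consequencesofresolution} (the paper's proof is just a one-line citation of that remark and Resolution \ref{resolution}). In fact you establish the slightly stronger statement that, for $x<y$, the pair $(S_x,S_y)$ \emph{is} exceptional while $(S_y,S_x)$ is not, which is a useful refinement but not a different method.
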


\begin{proof}
By projective resolution \ref{resolution} and remark \ref{consequencesofresolution}, only one of $(S_x,S_y)$ and $(S_y,S_x)$ can be an exceptional pair, because of nontrivial higher extensions.
\end{proof}

It is easy to check that pairs $(P(S),S)$ and $(S,I(S))$ are exceptional where $S$ is a simple module. We generalize this fact below:

\begin{definition} Let $S$ be a simple $\Lambda_n$ module. 
\begin{align}\label{projchain}
(P(\Omega^{t}(S)),P(\Omega^{t-1}(S)),\ldots,P(\Omega(S)),P(S),S)
\end{align} is called \emph{the projective $t$-chain of} $S$ where $0\leq t\leq \pdim S\!\!-\!\!1$ and denoted by $\cP_t(S)$. Similarly, the sequence:
\begin{align}\label{injchain}
 (S,I(S),I(\Sigma(S)),\ldots,I(\Sigma^{t-1}(S)),I(\Sigma^{t}(S)))
\end{align} is called \emph{the injective $t$-chain of} $S$ where $0\leq t\leq \indim S\!\!-\!\!1$ and denoted by $\cI_t(S)$.
\end{definition}

\begin{proposition}\label{chainsareexceptional} Let $S$ be a simple $\Lambda_n$ module. The following sequences are exceptional:
\begin{enumerate}[label=\arabic*)]
\item Projective $t$-chain of $S$ i.e. $\cP_t(S)$

\item Injective $t$-chain of $S$ i.e. $\cI_t(S)$.
\item $(P(\Omega^{t}(S)),\ldots,P(\Omega(S)),P(S),S,I(S),I(\Sigma(S)),\ldots,I(\Sigma^{t'}(S)))$\, for\\ $0\leq t\leq \pdim\!S\!-\!1$ and $0\leq t'\leq \indim\!S\!-\!1$ 
\end{enumerate}

\end{proposition}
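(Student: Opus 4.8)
<br>

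The plan is to verify the three exceptionality conditions directly using the structure of the projective resolution \ref{resolution} and the extension computations in Remark \ref{consequencesofresolution}. The key observation is that for the linear radical square zero Nakayama algebra $\Lambda_n$, the modules appearing in the projective resolution of a simple $[1]$ alternate between nonsimple projectives $[i,i+1]$ and simples $[i]$, and the syzygies $\Omega^i(S)$ of any simple $S=[x]$ are again simple modules $[x+i]$ (as long as they are defined). Consequently, the projective cover $P(\Omega^i(S))$ is the nonsimple projective $[x+i-1,x+i]$ (or the simple projective $[n]$ at the top). So each chain consists of modules whose composition factors are tightly controlled, and I would reduce everything to the four bullet points of Remark \ref{consequencesofresolution}.

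\emph{First}, for part 1) I would index the modules in $\cP_t(S)$ and check, for each pair with the later-listed module placed second, that $\Hom$ and all higher $\ext$ from the second into the first vanish. The entries to the left are projectives $P(\Omega^{j}(S))$, which are either nonsimple projectives $[x+j-1,x+j]$ or the simple projective $[n]$; since projectives have no higher self- or mutual extensions, the only conditions to check are the $\Hom$-vanishing among the projectives and the interaction with the final simple $S$. For the $\Hom$ among nonsimple projectives I would invoke the third bullet of Remark \ref{consequencesofresolution}, which says $\Hom([a,a+1],[b,b+1])\ne 0$ iff $a=b+1$; since the indices $x+j$ strictly increase as we move rightward in $\cP_t(S)$, the pair $(P(\Omega^{i}(S)),P(\Omega^{j}(S)))$ with $i>j$ has the module with the \emph{smaller} index listed second, so $a=x+j<x+i=b+1$ fails and the $\Hom$ is zero as required. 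The condition involving $S$ reduces to the already-remarked fact that $(P(S),S)$ is exceptional, propagated through the chain.

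\emph{Second}, part 2) is dual: the injective $t$-chain $\cI_t(S)$ consists of the simple $S$ followed by injectives $I(\Sigma^{j}(S))$, and since the injective resolution of $[n]$ produces the same diagram \ref{resolution}, the cosyzygies $\Sigma^{j}(S)$ are again simples with decreasing index, making the $I(\Sigma^{j}(S))$ nonsimple injectives whose indices now decrease as we move rightward. The same $\Hom$- and $\ext$-vanishing analysis applies by injective–projective duality (equivalently, by reading the Auslander–Reiten quiver in the opposite direction), and all higher extensions between injectives vanish. \emph{Third}, for part 3) the new content beyond merely concatenating is the cross-terms: one must check that every projective on the left forms an exceptional pair (in the correct order) with every injective on the right, and in particular that $\Hom$ and $\ext^{\ge 1}$ from a right-hand injective into a left-hand projective vanish. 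Here the crucial point is that the left entries have composition factors among $\{[x-1],[x],\ldots\}$ built from $\Omega$-syzygies while the right entries involve $\{[x],[x+1],\ldots\}$ from $\Sigma$-cosyzygies, and the relevant extensions are governed by the fourth bullet, $\ext^{i}([p],[q])\ne 0$ iff $i=q-p$ for $p<q$, together with the fifth bullet which kills all extensions in the ``wrong'' direction $\ext^{i}([q],[p])=0$ for $p<q$.

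The main obstacle I anticipate is the bookkeeping in part 3): because the composition factors of the nonsimple projectives and injectives are two-dimensional, one cannot simply apply the simple-module extension formulas verbatim but must resolve each $[a,a+1]$ and track how $\ext$ distributes over the short exact sequences $0\to[a+1]\to[a,a+1]\to[a]\to 0$. I expect the cleanest route is to compute $\ext^\bullet$ between the relevant indecomposables once, using resolution \ref{resolution}, and then observe that in the concatenated sequence the indices are arranged so that every later-listed module $N$ satisfies $\Hom(N,M)=0$ and $\ext^{\ge1}(N,M)=0$ for each earlier $M$ — the ordering is precisely engineered so that nontrivial extensions always point forward, never backward, which is exactly the content needed for an exceptional sequence.
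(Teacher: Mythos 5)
Your Hom-vanishing analysis is essentially the paper's argument, but there is a genuine gap on the $\ext$ side, and it sits exactly where you flag your ``main obstacle.'' In the projective chain $(P(\Omega^{t}(S)),\ldots,P(S),S)$ the simple $S$ is the \emph{last} entry, so exceptionality requires $\ext^{k}(S,P(\Omega^{a}(S)))=0$ for all $a\le t$ and all $k\ge 1$. Projectivity is useless here: it kills $\ext$ only when the projective sits in the first argument. Neither of your two substitutes closes this. Saying the condition is ``propagated through the chain'' from the exceptional pair $(P(S),S)$ is not an argument---exceptionality of adjacent pairs does not transfer along a sequence. And the d\'evissage you propose, resolving $[a,a+1]$ by $0\to[a+1]\to[a,a+1]\to[a]\to 0$ and quoting the simple--simple $\ext$ dimensions from Remark \ref{consequencesofresolution}, cannot conclude vanishing by itself: for $S=[x]$ the long exact sequence for $\ext^{\bullet}([x],-)$ has \emph{nonzero} outer terms in degrees $a$ and $a+1$, so you would still have to prove that the connecting map between them is an isomorphism, a Yoneda-product computation the remark does not provide.

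The missing idea---which is also what makes the paper's one-line dismissal of the $\ext$ conditions legitimate---is that every nonsimple indecomposable $[a,a+1]$ is projective \emph{and} injective: $[a,a+1]=P([a])=I([a+1])$, as one sees from the fact that resolution \ref{resolution} is simultaneously a projective resolution of $[1]$ and an injective resolution of $[n]$. Granting this, $\ext^{k}([a,a+1],-)=0=\ext^{k}(-,[a,a+1])$ for $k\ge 1$, so \emph{every} $\ext$ condition in all three parts is trivial, including your part-3 ``cross-terms,'' where the later-listed injectives $I(\Sigma^{b}(S))=[x-b-1,x-b]$ are themselves projective; only the Hom conditions remain, and those you (and the paper) handle via Remark \ref{consequencesofresolution} and the AR quiver. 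This observation also explains the hypothesis $t\le\pdim S-1$, which you gloss over by allowing ``the simple projective $[n]$ at the top'': $[n]$ is \emph{not} injective, it never occurs in a legitimate chain, and if it did the statement would be false, since $\ext^{n-x}([x],[n])\ne 0$ by the remark. (Minor slip in the same direction: $P(\Omega^{i}(S))=[x+i,x+i+1]$; your $[x+i-1,x+i]$ is $I([x+i])$, not its projective cover.)
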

\begin{proof}
It is trivial that there is no nontrivial extension between projective modules and simple modules. By AR quiver \ref{ARquiver} and remark \ref{consequencesofresolution}, there is no nontrivial homomorphism in one direction i.e. $\Hom(S,P(S))=0$, $\Hom(I(S),S)=0$, and for $i\geq 0$ $\Hom(P(\Omega^{i}(S)),P(\Omega^{i+1}(S)))=\!0$, $\Hom(I(\Sigma^{i+1}(S)),I(\Sigma^{i}(S)))=0$.
\end{proof}

If there is no need to specify $t$ in \ref{injchain} or \ref{projchain}, we call them simply chain, and drop subscript $t$. Notice that if $t=\pdim S-1$, modules in \ref{projchain} are projective modules of the projective resolution in the given order with reversed maps. We call the third sequence as the projective-injective chain. \\

Let $E$ be a complete exceptional sequence. 
\begin{lemma} E contains at least one simple module.
\end{lemma}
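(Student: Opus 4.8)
The plan is to reduce the statement to a pigeonhole count on the indecomposable modules of $\Lambda_n$. First I would record the complete list of indecomposables. Since $\Lambda_n$ is radical square zero with the linear quiver above, its Auslander--Reiten quiver exhibits exactly $2n-1$ indecomposable modules: the $n$ simple modules $[1],\ldots,[n]$ together with the $n-1$ length-two modules $[1,2],\ldots,[n-1,n]$. In particular, there are precisely $n-1$ non-simple indecomposable $\Lambda_n$-modules.

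Next I would argue that the entries of any exceptional sequence are pairwise non-isomorphic. Indeed, if a module $M$ occurred as both $M_i$ and $M_j$ with $i<j$, then by definition $(M,M)$ would have to be an exceptional pair, forcing $\Hom(M,M)=0$; but $M$ is exceptional, so $\Hom(M,M)\cong k\neq 0$, a contradiction. Hence a complete exceptional sequence $E$ is a list of $n$ \emph{distinct} indecomposable $\Lambda_n$-modules.

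Finally I would conclude by counting. The $n$ distinct modules appearing in $E$ are drawn from the $2n-1$ indecomposables, of which only $n-1$ are non-simple. If $E$ contained no simple module, then all $n$ of its entries would lie among the $n-1$ non-simple indecomposables, which is impossible. Therefore $E$ must contain at least one simple module.

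I expect no serious obstacle here: the argument is essentially the pigeonhole principle applied to the classification of indecomposables. The only point requiring any care is the observation that the entries of $E$ are distinct, which rests solely on the fact that an exceptional module $M$ satisfies $\Hom(M,M)\cong k$; once this is in hand, the conclusion is immediate.
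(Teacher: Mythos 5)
Your proof is correct and is essentially the same pigeonhole argument as the paper's: a complete exceptional sequence has $n$ entries, while $\Lambda_n$ has only $n-1$ nonsimple indecomposables. The only difference is that you explicitly verify the entries are pairwise non-isomorphic (via $\Hom(M,M)\cong k$), a point the paper leaves implicit; this is a worthwhile detail but not a different method.
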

\begin{proof}
Since, a complete exceptional sequence of $\Lambda_n$ has $n$ modules and there are $n-1$ nonsimple indecomposable modules, $E$ has to contain at least one simple module.
\end{proof}

\begin{definition}\label{index} For any module $X$ appearing in an exceptional collection $E$, we define its \emph{index} as its position in $E$. It is denoted by $ind_E(X)$.
\end{definition}

\begin{lemma}\label{MindexS}  Let $M\in E$ be a nonsimple module where $E$ is a complete exceptional sequence of $\Lambda_n$. We have:
\begin{enumerate}[label=\roman*)]
\item If $M$ is an element of a projective chain $\cP(S)$ of a simple module $S$ in $E$, then $ind_E(M)<ind_E(S)$. 
\item If $M$ is an element of an injective chain $\cI(S)$ of a simple module $S$ in $E$, then $ind_E(S)<ind_E(M)$. 
\end{enumerate}

\end{lemma}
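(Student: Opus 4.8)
The plan is to reduce both parts to a single ordering principle and then to apply it repeatedly along the chain. First I would isolate the principle: \emph{if $X,Y\in E$ and $\Hom(X,Y)\neq 0$, then $ind_E(X)<ind_E(Y)$}. Indeed, the positions in $E$ take values in the totally ordered set $\{1,\dots,n\}$, so either $ind_E(X)<ind_E(Y)$ or $ind_E(Y)<ind_E(X)$. In the latter case the pair $(Y,X)$ would be exceptional, and in particular would force $\Hom(X,Y)=0$, contradicting our hypothesis. Hence $ind_E(X)<ind_E(Y)$. This turns the lemma into a problem of exhibiting nonzero homomorphisms between consecutive members of the chain.

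Next I would make the chains explicit. For $S=[j]$ one computes $\Omega^{r}(S)=[j+r]$, hence $P(\Omega^{r}(S))=[j+r,j+r+1]$, so the projective chain \ref{projchain} reads $\bigl([j+t,j+t+1],\dots,[j+1,j+2],[j,j+1],[j]\bigr)$; dually $\Sigma^{r}(S)=[j-r]$ and $I(\Sigma^{r}(S))=[j-r-1,j-r]$, so the injective chain \ref{injchain} reads $\bigl([j],[j-1,j],[j-2,j-1],\dots\bigr)$. I would then read off the consecutive forward homomorphisms from Remark \ref{consequencesofresolution} and the top/socle structure. For part (i), the criterion $\Hom([x,x+1],[y,y+1])\neq 0 \iff x=y+1$ applied to $P(\Omega^{r}(S))=[j+r,j+r+1]$ and $P(\Omega^{r-1}(S))=[j+r-1,j+r]$ gives a nonzero map, while $\Hom(P(S),S)=\Hom([j,j+1],[j])\neq 0$ since $[j]$ is the top of $[j,j+1]$. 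For part (ii), $\Hom(S,I(S))=\Hom([j],[j-1,j])\neq 0$ since $[j]$ is the socle of $[j-1,j]$, and the same projective criterion gives $\Hom(I(\Sigma^{r-1}(S)),I(\Sigma^{r}(S)))\neq 0$.

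Finally I would chain these inequalities together. Applying the ordering principle to each consecutive pair yields the strict chains $ind_E(P(\Omega^{t}(S)))<\cdots<ind_E(P(S))<ind_E(S)$ and $ind_E(S)<ind_E(I(S))<\cdots<ind_E(I(\Sigma^{t}(S)))$, and transitivity of the order on positions gives $ind_E(M)<ind_E(S)$ in case (i) and $ind_E(S)<ind_E(M)$ in case (ii) for any nonsimple $M$ in the relevant chain.

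The hard part is conceptual rather than computational: the pair $(M,S)$ alone does \emph{not} determine the order once $M$ is not adjacent to $S$ in the chain. Since $M=[j+r,j+r+1]$ is projective one has $\ext^{k}(M,S)=0$ for all $k\geq 1$, and $\Hom(M,S)=0$ whenever $r\geq 1$, so both $(M,S)$ and $(S,M)$ are exceptional pairs in isolation. The argument therefore genuinely requires the intermediate members of the chain to lie in $E$ and forces the order transitively through them; I would make sure the statement is used with the full chain $\cP(S)$ (resp. $\cI(S)$) present in $E$, which is exactly the setting in which these chains arise in Section \ref{section3}.
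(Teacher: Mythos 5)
Your proof is correct and takes essentially the same approach as the paper: there, too, the argument orders consecutive members of the chain by observing that the nonzero maps of the (co)resolution --- exactly the homomorphisms you exhibit via Remark \ref{consequencesofresolution} and the top/socle structure --- rule out one of the two exceptional-pair orderings, and then chains the resulting inequalities $ind_E(P_t)<\cdots<ind_E(P_0)<ind_E(S)$ by transitivity. Your explicit ordering principle and your closing observation that $(M,S)$ alone does not force the order (so the intermediate chain members must genuinely lie in $E$) are simply spelled-out versions of what the paper's terser proof uses implicitly.
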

\begin{proof}
Consider projective resolution of simple module $S$:
\begin{center}
$P_{\pdim S}\mapsto \ldots\mapsto P_1\mapsto P_0\mapsto S$
\end{center}
Since $M\in\cP(S)$, we can choose $t$ so that $M\cong P_t$. Moreover, all $P_i$ of the resolution $0\leq i\leq t$ are elements of exceptional sequence $E$ by definition of projective chain \ref{projchain}. Existence of map $P_t\mapsto P_{t-1}$ implies $ind_E(P_t)<ind_E(P_{t-1})$, because $(P_{t-1},P_{t})$ is not an exceptional pair. This holds for all consecutive pairs of projective modules: 
\begin{align*}
ind_E(P_{t})&<ind_E(P_{t-1})\\
ind_E(P_{t-1})&<ind_E(P_{t-2})\\
&\mathrel{\makebox[\widthof{=}]{\vdots}}\\
ind_E(P_{1})&<ind_E(P_{0})\\
ind_E(P_{0})&<ind_E(S)
\end{align*}
Hence $ind_E(M)=ind_E(P_t)<ind_E(S)$.\\
By using analogous arguments, the second part of the lemma can be shown. 
\end{proof}

\begin{proposition}\label{everyMisinchain} Let $M\in E$ be a nonsimple module where $E$ is a complete exceptional sequence of $\Lambda_n$.
Then there is exactly one simple module $S$ so that $E$ contains the chain (projective or injective) from $S$ to $M$.

\end{proposition}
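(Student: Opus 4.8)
The plan is to turn the containment condition into statements about which indecomposables lie in $E$, and then reduce the whole proposition to a single counting identity. First I would record the explicit shape of the chains through $M=[x,x+1]$. From resolution \ref{resolution} one has $\Omega^{i}([j])=[j+i]$ and, dually, $\Sigma^{i}([j])=[j-i]$, so that $M=P([x])=I([x+1])$. Hence the projective chain from a simple $[j]$ with $j\le x$ to $M$ is $([x,x+1],[x-1,x],\dots,[j,j+1],[j])$, and the injective chain from a simple $[j]$ with $j\ge x+1$ to $M$ is $([j],[j-1,j],\dots,[x,x+1])$. Let $[a,b]$ be the maximal interval with $a\le x\le b$ such that $[a,a+1],\dots,[b,b+1]$ all lie in $E$ (the maximal run of consecutive nonsimples of $E$ passing through $M$). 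Containment of either chain in $E$ forces its nonsimple part to stay inside this run, and a short check shows that the simples $S=[j]$ for which $E$ contains the chain from $S$ to $M$ are exactly the $[j]\in E$ with $a\le j\le b+1$, independently of $x$. Thus the proposition is equivalent to: exactly one of $[a],[a+1],\dots,[b+1]$ lies in $E$.

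Next I would collect the forced index orderings inside $E$, all consequences of Remark \ref{consequencesofresolution}, Proposition \ref{orderingofsimples} and Lemma \ref{MindexS}. For two simples with $i<j$, the nonvanishing of $\ext^{j-i}([i],[j])$ forbids $[j]$ before $[i]$, so $ind_E([i])<ind_E([j])$. For two consecutive nonsimples, the nonzero map $[j+1,j+2]\to[j,j+1]$ forbids $[j,j+1]$ before $[j+1,j+2]$, so $ind_E([j+1,j+2])<ind_E([j,j+1])$. Finally, since $[j,j+1]=P([j])=I([j+1])$, Lemma \ref{MindexS} gives $ind_E([j,j+1])<ind_E([j])$ and $ind_E([j+1])<ind_E([j,j+1])$ whenever the modules involved lie in $E$.

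For uniqueness, suppose $[j_1],[j_2]\in E$ with $a\le j_1<j_2\le b+1$. The simple ordering gives $ind_E([j_1])<ind_E([j_2])$. On the other hand $[j_1,j_1+1],\dots,[j_2-1,j_2]$ all lie in the run, so chaining the consecutive-nonsimple inequalities yields $ind_E([j_2-1,j_2])\le ind_E([j_1,j_1+1])$; combining this with $ind_E([j_2])<ind_E([j_2-1,j_2])$ and $ind_E([j_1,j_1+1])<ind_E([j_1])$ gives $ind_E([j_2])<ind_E([j_1])$, a contradiction. Hence at most one simple of $\{[a],\dots,[b+1]\}$ belongs to $E$.

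The main obstacle is existence, which is not local: a single run need not by itself force a simple to appear, so I would argue globally. Let $E$ contain $s$ simples and $\nu=n-s$ nonsimples, the latter forming $r$ maximal runs. The run-intervals $[a_k,b_k+1]$ are pairwise disjoint subintervals of $\{1,\dots,n\}$ of total length $\nu+r$, whence $\nu+r\le n=\nu+s$ and so $r\le s$; the complementary ``free'' vertices then number exactly $s-r$. By uniqueness each run-interval contains at most one simple of $E$, and the free vertices contain at most $s-r$ simples of $E$, so the total number of simples is at most $r+(s-r)=s$. Since it equals $s$, both estimates are tight; in particular every run-interval contains exactly one simple of $E$. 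Applied to the run through $M$, this yields existence and completes the proof.
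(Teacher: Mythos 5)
Your proof is correct, but it takes a genuinely different route from the paper's. The paper proves existence by induction on the rank $n$ (removing $M$ and analyzing the remaining modules as exceptional sequences over smaller Nakayama algebras) and proves uniqueness separately, by a three-case analysis on the relative positions of $M$ and the two simples, using syzygy identifications to produce nonvanishing $\ext$ groups that contradict exceptionality. You instead make the combinatorics fully explicit: using $\Omega^i([j])=[j+i]$, $\Sigma^i([j])=[j-i]$ you identify the chain from $[j]$ to $M=[x,x+1]$ with an interval of consecutive nonsimples plus the simple $[j]$, reduce the proposition to the claim that each maximal run-interval $[a,b+1]$ meets exactly one simple of $E$, prove the ``at most one'' part by a cycle of forced index inequalities (which is a cleaner packaging of the same Hom/Ext obstructions the paper uses, cf.\ Remark \ref{consequencesofresolution} and Lemma \ref{MindexS}), and then — this is the real novelty — deduce existence \emph{from} uniqueness by a global pigeonhole count: disjoint run-intervals of total size $\nu+r$, at most one simple per run-interval, at most $s-r$ simples at free vertices, and equality with the total $s$ forces exactly one simple per run-interval. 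This avoids induction entirely, is arguably tighter than the paper's induction step (which requires some care in viewing truncations of $E$ as sequences over $\Lambda_a$, $\Lambda_b$), and as a bonus it exposes precisely the run/cap structure that the paper later exploits in Section \ref{section3} to build the inverse map $\Gamma$; the paper's approach, in exchange, stays closer to the homological algebra and exhibits the explicit extension groups that obstruct double membership. One small presentational point: when you assert that containment of the chain's modules in $E$ is equivalent to containment of the chain as an ordered subsequence, you should say explicitly that the orderings collected in your second paragraph force the chain order automatically; this is true and follows from exactly those inequalities, but the sentence ``a short check shows'' is doing that work silently.
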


\begin{proof} Proof has two parts: existence and then uniqueness of the chain containing $M$.\\
We will show that each nonsimple module appearing in a complete exceptional collection belongs to at least one chain in $E$ by induction on the rank $n$ of $\Lambda_n$. \\
If $n=2$, there are two exceptional sequences containing nonsimple modules: $([2],[1,2])$ and $([1,2],[1])$. It is clear that they form chains.\\

We assume that for complete exceptional sequences of $\Lambda_{n-1}$ the claim holds. Now we analyze the case where rank is $n$. Assume to the contrary that $M$  is not an element of any chain. Therefore simple modules top and socle of $M$ i.e $top(M)$, $soc(M)$ are not elements of $E$. Because pairs $(M,top(M))$ and $(soc(M),M)$ form projective $1$-chain and injective $1$-chain respectively.
\begin{enumerate}[label=\textbf{Case\! \arabic*:}]
\item $M$ is either isomorphic to module $[1,2]$ or $[n-1,n]$. Let $M\cong [n-1,n]$. By assumption, $M$ is not an element of any chain, so, $[n]$ and $[n-1]$ are not in $E$. Then $E'$, the complement of $M$ in $E$, is a complete exceptional sequence of $\Lambda_{n-1}$. Moreover, the module $[n-2,n-1]$ is also not an element of sequence the $E$ since by the induction hypothesis $[n-2,n-1]$ belongs to some chain in complete exceptional sequence $E'$ of $\Lambda_{n-1}$. Therefore, $E'$ is an exceptional sequence for $\Lambda_{n-2}$. But this implies cardinality of $E$:  $\#(E)=1+\#(E')$ is $n-1$. Therefore $E$ cannot be complete. Similar arguments hold for $M\cong [1,2]$.

\item $M$ is nonsimple projective module not isomorphic to $[1,2]$ or $[n-1,n]$. Modules appearing in $E$ except $M$ can be viewed as modules over $\Lambda_a$ and $\Lambda_b$ where $a+b=n-2$, since $top(M)$, $I(top(M))$, $soc(M)$, $P(soc(M))$ are not in $E$ by assumption. Cardinality of $E$: $\#(E)=1+a+b<a+b+2=n$ cannot be $n$, so $E$ is not complete.
\end{enumerate}

Therefore $M$ is an element of at least one chain in $E$.\\Now we prove that $M$ belongs to at most one chain. Assume to the contrary that let $M$ be an element of both $S_x$ and $S_y$ chains with $S_x,S_y\in E$. By proposition \ref{orderingofsimples} we can assume that $x<y$.
There are there possibilities with respect to their positions in $E$:
\begin{enumerate}[label=\textbf{Case\! \arabic*:}]
\item $ind_E(M)<ind_E(S_x)<ind_E(S_y)$.
Notice that $M$ cannot appear in injective chains of both $S_x$, $S_y$ by lemma \ref{MindexS}. Therefore $M\in \cP(S_x)$ and $M\in\cP(S_y)$. This is equivalent to existence of some integers $i,j$ with $i<j$ so that :
\begin{center}
$M\cong P(\Omega^i(S_x))\cong P(\Omega^j(S_y)) \iff$\\
$\Omega^i(S_x)\cong \Omega^j(S_y) \iff$\\
$S_x\cong \Omega^{j-i}(S_y)\iff$\\
$\ext^{j-i}(S_y,S_x)\ncong 0$
\end{center}
Contadiction, since we presumed that $(S_x,S_y)$ is an exceptional pair.
\item $ind_E(S_x)<ind_E(M)<ind_E(S_y)$
By lemma \ref{MindexS}, $M$ is an element of $\cP(S_x)$ and $\cI(S_y)$.  This is equivalent to existence of some integers $i,j$ so that :
\begin{center}
$M\cong I(\Sigma^i(S_x))\cong P(\Omega^j(S_y)) \iff$\\
$P(\Sigma^{i+1}(S_x))\cong \Omega^j(S_y) \iff$\\
$\Sigma^{i+1}(S_x)\cong \Omega^j(S_y)\iff$\\
$\ext^{i+j+1}(S_y,S_x)\ncong 0$
\end{center}
Notice that we used $I(\Sigma^i(S_x))\cong P(\Sigma^{i+1}(S_x))$ which is clear from the resolution \ref{resolution}.
\item $ind_E(S_x)<ind_E(S_y)<ind_E(M)$.
$M$ cannot be an element of projective chains of both $S_x$ and $S_y$ by lemma \ref{MindexS}. Therefore $M\in\cI(S_x)$ and $M\in\cI(S_y)$. This implies the following isomorphisms for some integers $i,j$ with $i>j$:
\begin{center}
$M\cong I(\Sigma^i(S_x))\cong I(\Sigma^j(S_y)) \iff$\\
$\Sigma^i(S_x)\cong \Sigma^j(S_y) \iff$\\
$\Sigma^{i-j}(S_x)\cong S_y\iff$\\
$\ext^{i-j}(S_y,S_x)\ncong 0$
\end{center}
$ind_E(S_x)<ind_E(S_y)$ implies that $x<y$ since otherwise $\ext^{x-y}(S_y,S_x)\neq 0$. But then $E$ contains the injective chain from $S_y$ to $M$ which includes $[x-1,x]$
In $E$, $[x-1,x]$ has to come after $S_y$ and before $S_x$ which is a contradiction.

\end{enumerate}
\end{proof}

\section{Bijection with idempotent functions}\label{section3}
We will construct a map and its inverse between the set of complete exceptional sequences and the set of idempotent functions to show that the map is a bijection.
We recall definition \ref{index} of the index $ind_E(M)$ of a module $M$ in $E$ which  is simply its position.
\subsection{Description of Map }

Let $E=(E_1,E_2,\ldots,E_n)$ be a complete exceptional sequence of $\Lambda_n$.
For each $E_i$, define the map $\Phi_E$ as:
\begin{center}\label{phimap}
$\Phi_E(E_i)=\begin{cases} 
ind_E(E_i)=i &\text{if}\,\, E_i\,\,\text{is a simple module}\\
\multirow{2}{*}{$ind_E(S)$} & \text{if}\,\, E_i\in\cP(S)\,\,\text{or}\,\, E_i\in\cI(S)\,\,\text{where}\,\,S\,\,\text{is simple module in}\,\, E\\ 
    &\text{and}\, E\,\, \text{contains the chain from}\,\, S\,\, \text{to}\,\, E_i\\
\end{cases}$
\end{center}

For any exceptional sequence $E$, we make the assignment $E\mapsto \Phi(E)$ where:\label{assignment}
\begin{center}
$\Phi(E):=\left(\Phi_E(E_1),\Phi_E(E_2),\ldots,\Phi_E(E_n)\right)$
\end{center}

Since $\Phi(E)$ is an array of size $n$, its $i$th component is denoted by $\Phi(E)_i:=\Phi_E(E_i)$. The map $\Phi_E$ is well defined, since each nonsimple module $E_i$ belongs to unique chain by proposition \ref{everyMisinchain}.

\begin{definition}\label{defidempotent} For a given complete exceptional sequence $E$ of $\Lambda_n$, consider the following function from the set of size $n$:
\begin{center}
$f:\left\{1,2,\ldots,n\right\}\longrightarrow\left\{1,2,\ldots,n\right\}$\\\vspace{0.2cm}
$f(i):=\Phi(E)_i$
\end{center}
where $\Phi(E)_i=\Phi_E(E_i)$ defined in \ref{phimap}.
\end{definition}

\begin{proposition} Function $f$ defined in \ref{defidempotent} is an idempotent function on the set $\left\{1,2,\ldots,n\right\}$.
\end{proposition}
\begin{proof}
Since $E$ is exceptional sequence, fixed points of $f$ i.e. $f(x)=x$ are exactly indices of simples in $E$ i.e. $ind_E(S)$. For the remaining points, assume that $f(x)=y$. Because of definition of $f$ in \ref{defidempotent}, $y$ has to be an index of a simple module in $E$ by the map $\Phi_E$. Hence $f(y)=y$. We combine this by composition of $f$:
\begin{align*}
f^2(x)=f(f(x))=f(y)=y
\end{align*}
Since $f(x)=y$, we get $f^2(x)=f(x)$. Therefore for each element of $\left\{1,\ldots,n\right\}$, $f$ is an idempotent function.
\end{proof}

\begin{remark} To denote those functions we can use the assignment given in \ref{assignment}. For example $(2,2,2)$ is the idempotent function with $f(1)=f(2)=f(3)=2$. Therefore for each complete exceptional sequence $E$, we view $n$-tuple $\Phi(E)$ as an idempotent function.
\end{remark}

\begin{example} We give here exceptional sequences of $\Lambda_2$ and $\Lambda_3$ and corresponding idempotent functions due to the map $\Phi$ to be more illustrative. 
\begin{itemize}
\item There are $3$ exceptional sequences of  $\Lambda_2$.
\begin{center}

\begin{tabular}{|c|c|}\hline
$E$ & $\Phi(E)$\\ \hline 
$([1],[2])$ & (1,2)\\ \hline 
$([1,2],[1])$ &(2,2) \\ \hline
$([2],[1,2])$ & (1,1) \\ \hline
\end{tabular}
\end{center}

\item There are 10 exceptional sequences of $\Lambda_3$
\begin{center}

\begin{tabular}{|c|c|}\hline
$E$ & $\Phi(E)$\\ \hline 
$([1],[2],[3])$ & (1,2,3)\\ \hline
$([3],[2,3],[1,2])$ & (1,1,1) \\ \hline
$([2,3],[2],[1,2])$ & (2,2,2) \\ \hline
$([2,3],[1,2],[1])$ &  (3,3,3)\\ \hline
$([1,2],[1],[3])$ & (2,2,3) \\ \hline
$([1],[2,3],[2])$ & (1,3,3) \\ \hline
$([1],[3],[2,3])$ & (1,2,2) \\ \hline
$([2],[1,2],[3])$ & (1,1,3)\\ \hline
$([2],[3],[1,2])$ & (1,2,1)\\ \hline
$([2,3],[1],[2])$ & (3,2,3) \\ \hline
\end{tabular}
\end{center}

\end{itemize}
\end{example}

For example, in the last exceptional sequence $\Phi([2,3])=3$ and not $2$ since the chain $([2,3],[1,2],[1])$ is not contained in $E=([2,3],[1],[2])$.

\subsection{Inverse map}\label{inversemap}

 We will construct a one to one map from idempotent functions to exceptional sequences. Since the two sets are finite, the existence of inverse $\Phi$ map is equivalent to bijection between two sets.

Let $A$ be a $n$-tuple corresponding to an idempotent function. One can show that there exists at least one fixed point of $A$, \cite{idempotent}.
Assume that there are $x$ many distinct ordered fixed points, $p_1,p_2,\ldots,p_x$ i.e. $A(p_i)=p_i$, with $p_1<p_2<\ldots<p_x$. Since $A$ is idempotent, each $p_i$ has to appear in $A$ $c_i$ times, where $1\leq c_i$. Moreover, we can count number of appearances of each $p_i$ comparing its relative position to $p_i$, i.e. $c_i=a_i+b_i+1$ where $a_i$ is number of appearances of nonfixed $p_i$'s \emph{after} fixed $p_i$ and $b_i$ is the number of appearances of nonfixed $p_i$'s \emph{before} fixed $p_i$.

By using array $A$ we will construct a unique exceptional sequence of size $n$. Let $T_1,\ldots,T_x$ be numbers such that $[T_1],\ldots,[T_x]$ are all simple modules of $E$ satisfying $ind_E([T_i])=p_i$ for all $1\leq i\leq x$. We can interpret numbers $a_i,b_i$ in terms of chains i.e.
$a_i$ is the number of nonsimple modules in injective chain $\cI([T_i])$ of $[T_i]$. Similarly, $b_i$ is the number of nonsimple modules in projective chain $\cP([T_i])$ of $[T_i]$. Both chains have transparent structure: by using $a_i$ and $b_i$ we can describe all modules in them.  

Consider the following inequalities for $i$:
\begin{align}\label{ineqI}
T_i-a_i<T_i-a_i+1<\ldots<T_i-1<T_i<T_i+1<\ldots<T_i+b_i
\end{align}
Without loss of generality, here we assume that both $a_i$ and $b_i$ are nonzero. Interpretation of those in terms of the representations of $\Lambda_n$ is:
\begin{align}\label{interpretationofT}
[T_i+j]\cong soc(P(\Omega^{j-1}[T_i]))\\
[T_i-j]\cong top(I(\Sigma^{j-1}[T_i]))\nonumber
\end{align}

Now, we construct inequalities involving distinct $T_i$'s. By proposition \ref{orderingofsimples}, there is a unique choice for the ordering in $E$ i.e. 
\begin{align}\label{ineq1}
T_1<T_2<\ldots<T_x
\end{align}
Since each chain associated to distinct simple $[T_i]$'s are disjoint, we obtain the following inequalities:
\begin{align}\label{ineq2}
T_1+b_1&<T_2-a_2\\
T_2+b_2&<T_3-a_3\nonumber\\
&\mathrel{\makebox[\widthof{=}]{\vdots}}\nonumber\\
T_{x-1}+b_{x-1}&<T_x-a_x\nonumber
\end{align}
These inequalities are consequences of proposition \ref{everyMisinchain} and isomorphisms \ref{interpretationofT}. By simple observation, we obtain two more inequalities:
\begin{align}\label{ineq3}
0&<T_1-a_1\\
T_x+b_x&<n+1\nonumber
\end{align}

To find indices $T_1,\ldots,T_x$ we need to find a common solution to the system of inequalities given in \ref{ineqI},\ref{ineq1},\ref{ineq2} and \ref{ineq3}. This is relatively easy, because there are $n$ terms in total, the unique solution is $1<2<\ldots<n$. Therefore, we get:
\begin{align*}
T_1&=1+a_1\\
T_2&=T_1+a_2+b_1+1\\
T_3&=T_2+a_3+b_2+1\\
&\mathrel{\makebox[\widthof{=}]{\vdots}}\\
T_x&=T_{x-1}+a_{x}+b_{x-1}+1\\
\end{align*}

After placing each simple module $[T_i]$ to position $p_i$, one can place nonsimple modules according to positions of $a_i$'s and $b_i$'s. We did not specify them to avoid complications, however the structure of those modules are clear from the isomorphisms \ref{interpretationofT}. Moreover the existence of a unique solution implies that the exceptional sequence $E$ is unique, hence the assignment $\Gamma:A\mapsto E$ is injection for distinct idempotent functions.

\begin{example} 
Let $A=(7,2,2,4,4,7,7)$ be an idempotent function. There are $3$ fixed points: $p_1=2$, $p_2=4$ and $p_3=7$. Other ingredients are $a_1=1,b_1=0,a_2=1,b_2=0,a_3=0,b_3=2$. We want to find simple modules $[T_1],[T_2],[T_3]$ satisfying:
\begin{align}
0<T_1-1<T_1<T_2-1<T_2<T_3<T_3+1<T_3+2<8
\end{align}
The unique solution is: $T_1=2$, $T_2=4$ and $T_3=5$. Therefore at first we place simple modules into the exceptional sequence $E$:
\begin{align*}
(-,[2],-,[4],-,-,[5])
\end{align*}
By using $A(3)=2$, $A(5)=5$ and $A(1)=A(6)=7$ we get: $E_3=[1,2]$, $E_5=[3,4]$, $E_1=[6,7]$ and $E_6=[5,6]$. Hence the exceptional sequence $E$ obtained by function $A$ is :
\begin{center}
$([6,7],[2],[1,2],[4],[3,4],[5,6],[5])$
\end{center}
\end{example}

\begin{proposition} For each idempotent function $A$, let $\Gamma$ be map: 
\begin{align}
\Gamma:A\longrightarrow E
\end{align}
obtained by construction in subsection \ref{inversemap}. Then $\Phi$ and $\Gamma$ are inverse functions of each other.
\end{proposition}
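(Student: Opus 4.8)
The plan is to show that $\Phi$ and $\Gamma$ are mutually inverse by verifying both composites are the identity, and since the two finite sets have been constructed in tandem it suffices to check one composite carefully and invoke cardinality. Concretely, I would first establish $\Phi \circ \Gamma = \mathrm{id}$ on the set of idempotent functions: starting from an idempotent function $A$ with ordered fixed points $p_1 < \cdots < p_x$ and associated data $a_i, b_i$, the construction in subsection \ref{inversemap} produces a unique exceptional sequence $E$ whose simple modules $[T_i]$ sit at positions $p_i$. The key observation is that, by the isomorphisms \ref{interpretationofT}, the module placed at each position $q \neq p_i$ is precisely an element of the projective chain $\cP([T_i])$ (when $q$ lies among the $b_i$ slots before $p_i$) or the injective chain $\cI([T_i])$ (when $q$ lies among the $a_i$ slots after $p_i$). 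Applying $\Phi$ to $E$ then returns, by the definition \ref{phimap} of $\Phi_E$, the value $ind_E([T_i]) = p_i$ at each such position $q$, and returns $p_i$ itself at the fixed positions; this is exactly $A(q)$ for all $q$, so $\Phi(\Gamma(A)) = A$.

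For the reverse composite $\Gamma \circ \Phi = \mathrm{id}$, I would start from a complete exceptional sequence $E$, form the idempotent function $A = \Phi(E)$, and feed $A$ back into $\Gamma$. Here the crucial input is proposition \ref{everyMisinchain}: every nonsimple module of $E$ belongs to exactly one chain of a unique simple, so the data $(p_i, a_i, b_i)$ extracted from $A$ faithfully records the chain structure of $E$. The uniqueness of the solution to the system of inequalities \ref{ineqI}, \ref{ineq1}, \ref{ineq2}, \ref{ineq3}, namely $T_i = 1 + \sum_{k<i}(a_k+b_k+1) + a_i$ forced by there being exactly $n$ terms, guarantees that $\Gamma(A)$ reconstructs the same simple labels $[T_i]$ at the same positions, and then the isomorphisms \ref{interpretationofT} pin down every nonsimple entry to agree with the original $E$. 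Thus $\Gamma(\Phi(E)) = E$.

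Either composite alone already implies the result: since $\Phi$ and $\Gamma$ map between two finite sets, a single equality $\Phi \circ \Gamma = \mathrm{id}$ shows $\Phi$ is surjective and $\Gamma$ injective, and the matching cardinalities (each exceptional sequence yields an idempotent function and vice versa, both constructions being well defined by proposition \ref{everyMisinchain} and the uniqueness established in subsection \ref{inversemap}) upgrade this to a genuine bijection. I would therefore present the verification of $\Phi \circ \Gamma = \mathrm{id}$ in full and remark that $\Gamma \circ \Phi = \mathrm{id}$ follows identically, or alternatively by finiteness.

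The main obstacle I anticipate is the bookkeeping that translates the counting data $a_i, b_i$ around each fixed point $p_i$ into the correct projective-versus-injective chain membership, and checking that the positions match exactly. In particular one must confirm that the entries occurring \emph{before} a fixed point $p_i$ in the array correspond to projective-chain modules (which by lemma \ref{MindexS} have index smaller than $ind_E(S)$) while those \emph{after} correspond to injective-chain modules (index larger than $ind_E(S)$), and that distinct chains do not collide — this is where the disjointness inequalities \ref{ineq2} and proposition \ref{everyMisinchain} do the real work. Once this correspondence between array positions and chain positions is made precise, the agreement of $\Phi(\Gamma(A))$ with $A$ is a direct consequence of the definitions.
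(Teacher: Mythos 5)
Your main line of argument is sound, and it is actually more self-contained than the paper's own proof: you verify both composites $\Phi\circ\Gamma=\mathrm{id}$ and $\Gamma\circ\Phi=\mathrm{id}$ directly, whereas the paper verifies only $\Gamma\circ\Phi=\mathrm{id}$ (by noting that $E$ itself satisfies the system of inequalities \ref{ineqI}, \ref{ineq1}, \ref{ineq2}, \ref{ineq3}, whose solution is unique) and then combines the resulting surjectivity of $\Gamma$ with the injectivity of $\Gamma$ already recorded at the end of subsection \ref{inversemap} to conclude that $\Gamma$ is a bijection and $\Phi=\Gamma^{-1}$. Your bookkeeping in the first paragraph (positions before $p_i$ carry modules of $\cP([T_i])$, positions after $p_i$ carry modules of $\cI([T_i])$, consistent with lemma \ref{MindexS}, so $\Phi_E$ returns $p_i$ at all of them) is exactly right, and your second paragraph is essentially the paper's argument. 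If you write up both composites as outlined, the proof is complete.

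The one genuine flaw is the claim that ``either composite alone already implies the result'' by finiteness. A one-sided identity between two finite sets does not yield a bijection unless the sets are independently known to have equal cardinality, and equal cardinality is precisely what is being proved here; your parenthetical justification (that both maps are well defined) is circular. Concretely, if $\cE=\{a,b\}$, $\cI=\{1\}$, $\Phi(a)=\Phi(b)=1$ and $\Gamma(1)=a$, then $\Phi\circ\Gamma=\mathrm{id}$ yet neither map is a bijection. Worse, the composite you propose to present ``in full,'' namely $\Phi\circ\Gamma=\mathrm{id}$, is the one that does \emph{not} suffice even when combined with the injectivity of $\Gamma$ from subsection \ref{inversemap}: it gives injectivity of $\Gamma$ (already known) and surjectivity of $\Phi$, but nothing rules out $\Phi$ identifying two distinct exceptional sequences. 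The paper's composite $\Gamma\circ\Phi=\mathrm{id}$ is the one that closes the argument, since it makes $\Gamma$ surjective, hence bijective, hence $\Phi=\Gamma^{-1}$. So either keep both verifications, or keep $\Gamma\circ\Phi=\mathrm{id}$ together with the injectivity of $\Gamma$, but drop the ``alternatively by finiteness'' shortcut.
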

\begin{proof}
First, we want to show that $\Gamma\circ\Phi$ is the identity. Let $E$ be complete exceptional sequence of $\Lambda_n$. If we apply $\Gamma$ to idempotent function $\Phi(E)$, we observed that solution to system of inequalities \ref{ineq1},\ref{ineq2}, \ref{ineq3} is unique. Exceptional sequence $E$ satisfies those, therefore $\Gamma\circ\Phi$ is identity on the set of complete exceptional sequences of size $n$. This implies $\Gamma$ is surjective. But we already know it is injective. So, it is a bijection and so is $\Phi=\Gamma^{-1}$.
\end{proof}

Now we restate and prove the main result \ref{maintheoremIntro} of the paper:

\begin{theorem}\label{maintheorem}
There is a bijection between the set of complete exceptional sequences of linear radical square zero Nakayama algebras $\Lambda_n$ and the set of idempotent functions on the set $\left\{1,2,\ldots,n\right\}$ which is given by the map $\Phi$ defined in \ref{phimap}.
\end{theorem}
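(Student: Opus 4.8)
The plan is to establish that $\Phi$ is a bijection by exhibiting the two-sided inverse $\Gamma$ constructed in Subsection~\ref{inversemap}. Since both the set of complete exceptional sequences of $\Lambda_n$ and the set of idempotent functions on $\{1,\ldots,n\}$ are finite, it suffices to verify that $\Gamma\circ\Phi$ and $\Phi\circ\Gamma$ are the respective identity maps; in fact establishing that $\Phi$ is well-defined together with one composition being the identity already forces bijectivity.

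First I would confirm that $\Phi$ is well-defined as a map into idempotent functions. This has two ingredients, both already supplied earlier in the text: Proposition~\ref{everyMisinchain} guarantees that every nonsimple $E_i$ lies in a \emph{unique} chain from a unique simple $S\in E$, so $\Phi_E(E_i)$ is unambiguously defined, and the proposition preceding Definition~\ref{defidempotent} shows the resulting function $f=\Phi(E)$ is idempotent. Second, I would show $\Gamma\circ\Phi=\mathrm{id}$ on exceptional sequences: given $E$, the function $\Phi(E)$ has its fixed points exactly at the indices of the simple modules of $E$, and the numbers $a_i,b_i$ read off from $\Phi(E)$ count precisely the nonsimple modules of the injective and projective chains of each simple $[T_i]$, by Lemma~\ref{MindexS} and the isomorphisms~\ref{interpretationofT}. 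Since $E$ itself satisfies the full system of inequalities~\ref{ineqI}, \ref{ineq1}, \ref{ineq2}, \ref{ineq3}, and that system was shown to have a \emph{unique} solution, the sequence $\Gamma(\Phi(E))$ must coincide with $E$ module by module.

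The remaining and most delicate point is to check that $\Gamma$ actually lands in the set of genuine complete exceptional sequences, i.e. that the sequence $\Gamma(A)$ produced from an arbitrary idempotent $A$ really is exceptional. The placement of simples at the fixed-point positions $p_i$ respects the unique admissible ordering $T_1<\cdots<T_x$ of Proposition~\ref{orderingofsimples}, and the nonsimple modules are filled in as honest projective/injective chain elements via~\ref{interpretationofT}; the disjointness inequalities~\ref{ineq2} ensure distinct chains do not collide, so every pair $(E_i,E_j)$ with $i<j$ is an exceptional pair by Proposition~\ref{chainsareexceptional} and Remark~\ref{consequencesofresolution}. Once $\Gamma$ is known to be well-defined and injective (distinct $A$ give distinct solution data, hence distinct sequences), the identity $\Gamma\circ\Phi=\mathrm{id}$ forces $\Phi$ to be injective and $\Gamma$ to be surjective; combined with injectivity of $\Gamma$ this yields that both are bijections with $\Phi=\Gamma^{-1}$.

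I expect the main obstacle to be the verification hidden in the second paragraph, namely that the combinatorial data $(a_i,b_i)$ extracted from $\Phi(E)$ correctly reconstruct the chain structure of $E$ in \emph{all} three positional cases analyzed in Proposition~\ref{everyMisinchain} (the $M$-before, between, and after the simples cases). One must argue carefully that no nonsimple module of $E$ escapes being counted by some $a_i$ or $b_i$ and that the reconstructed chains exhaust $E$; this is where the uniqueness clause of Proposition~\ref{everyMisinchain} does the heavy lifting, and I would lean on it explicitly rather than re-deriving the chain membership from scratch.
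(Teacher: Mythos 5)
Your proposal follows essentially the same route as the paper: the paper also proves the theorem by showing $\Gamma\circ\Phi=\mathrm{id}$ (since $E$ satisfies the inequality system \ref{ineqI}, \ref{ineq1}, \ref{ineq2}, \ref{ineq3}, whose solution is unique), combining this with injectivity of $\Gamma$ and finiteness to conclude that both maps are mutually inverse bijections. Your extra care in checking that $\Gamma(A)$ is genuinely a complete exceptional sequence addresses a point the paper leaves implicit, but it is a refinement of the same argument rather than a different one.
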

\begin{proof}
In the previous proposition we constructed an inverse to the map $\Phi$. Since two sets are finite, $\Phi$ and $\Gamma$ are bijections.
\end{proof}

\begin{theorem}
The number of exceptional sequences over linear radical square zero Nakayama algebra $\Lambda_n$ is given by $\sum_{j\geq 1} \binom{n}{j}j^{n-j}$.
\end{theorem}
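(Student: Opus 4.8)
The plan is to count idempotent functions on $\{1,2,\ldots,n\}$ directly, since Theorem \ref{maintheorem} gives a bijection between these and the complete exceptional sequences of $\Lambda_n$. An idempotent function $f$ satisfies $f\circ f=f$, which is equivalent to saying that every value in the image of $f$ is a fixed point of $f$. First I would stratify the count by the size of the image: let $j=|\operatorname{Im}(f)|$, so that $1\le j\le n$. The image is precisely the set of fixed points, because for any $y=f(x)$ in the image, idempotency forces $f(y)=f(f(x))=f(x)=y$.

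The second step is to count, for a fixed value of $j$, the number of idempotent functions whose image has exactly $j$ elements. I would choose the image set $F\subseteq\{1,\ldots,n\}$ of fixed points in $\binom{n}{j}$ ways. Each of these $j$ elements maps to itself, so there is no further choice on $F$. The remaining $n-j$ elements lie outside $F$, and idempotency combined with the requirement $\operatorname{Im}(f)=F$ forces each such element to map into $F$ (if some element outside $F$ mapped to a value not in $F$, that value would be a fixed point outside $F$, contradicting that $F$ is the full fixed-point set; and all such elements must be non-fixed, which is automatic since they are not in $F$). Each of the $n-j$ non-fixed elements may independently take any of the $j$ values in $F$, giving $j^{\,n-j}$ choices. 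Hence the number of idempotent functions with image of size exactly $j$ is $\binom{n}{j}j^{\,n-j}$.

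Summing over all admissible $j$ then yields
\begin{align*}
\#\{\text{idempotent functions on }\{1,\ldots,n\}\}=\sum_{j=1}^{n}\binom{n}{j}j^{\,n-j},
\end{align*}
and by the bijection of Theorem \ref{maintheorem} this is exactly the number of complete exceptional sequences over $\Lambda_n$. The one point requiring genuine care—the main obstacle, though a mild one—is verifying the forward implication in the image characterization, namely that once $F$ is declared to be the fixed-point set, every non-fixed point is genuinely free to map anywhere in $F$ without creating a new fixed point or shrinking the image below $j$. I would address this by noting that a point outside $F$ can never become fixed regardless of its image (it is not in $F$ by construction), so no constraint beyond $f(i)\in F$ is imposed, and conversely every such assignment is idempotent. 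This confirms the count $j^{\,n-j}$ is exact rather than an over- or undercount, completing the argument.
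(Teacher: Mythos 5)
Your proof is correct and follows essentially the same route as the paper: invoke the bijection of Theorem \ref{maintheorem}, then count idempotent functions by choosing the $j$-element fixed-point set in $\binom{n}{j}$ ways and letting each of the remaining $n-j$ elements map freely into it, giving $j^{n-j}$ choices. The paper gives only a sketch of this count (citing Harris--Schoenfeld for details), whereas you carefully verify that the stratification by image size is exact, which is a welcome addition but not a different argument.
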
 

\begin{proof}
By the theorem \ref{maintheorem}, it is equivalent to counting the number idempotent functions. Here we give a sketch of proof which gives number of idempotent functions. Detailed version can be found at \cite{idempotent}. Let $f$ be an idempotent function. The fixed point set is nonempty. Assume that there are $j$ fixed points for $f$ and $n-j$ nonfixed points. The total number of ways to choose fixed points is $\binom{n}{j}$. The number of maps which does not have any fixed point is given by $j^{n-j}$. Therefore we get $\sum_{j\geq 1} \binom{n}{j}j^{n-j}$.
\end{proof}

\section{Height at most one Forests}\label{section4}
In the work of Riordan \cite{forests} table 1 includes the number of height at most one forests with $n$ roots. Its relation with idempotent functions explicity stated in \cite{idempotent}. We rely on those to state extended result:
\begin{theorem}
The following three sets are equivalent to each other:
\begin{itemize}
\item The set of labeled height at most one forests with $n$ labeled vertices
\item The set of idempotent endofunctions on the set $\left\{1,2,\ldots,n\right\}$ 
\item The set of complete exceptional sequences of linear radical square zero Nakayama algebra $\Lambda_n$.
\end{itemize}
\end{theorem}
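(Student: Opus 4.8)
The statement asserts a three-way equivalence of sets, but two of the three equivalences are already established elsewhere in the paper and in the literature, so the plan is to assemble the proof by composing bijections rather than constructing anything new. First I would invoke Theorem~\ref{maintheorem}, which furnishes an explicit bijection $\Phi$ (with inverse $\Gamma$) between the set of complete exceptional sequences of $\Lambda_n$ and the set of idempotent endofunctions on $\{1,2,\ldots,n\}$. This settles the equivalence of the second and third bullet points outright; nothing remains to be checked there beyond citing the already-proven theorem.

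\textbf{The classical input.} For the equivalence of the first and second bullet points, the plan is to appeal to the result recalled from \cite{idempotent}, which states that the enumeration of idempotent functions coincides with the enumeration of forests of rooted labeled trees of height at most one. I would strengthen the language slightly from ``equinumerous'' to ``in explicit bijection'' by recalling the standard correspondence: given an idempotent $f$ on $\{1,\ldots,n\}$, its fixed points $\{x : f(x)=x\}$ become the roots of the forest, and each non-fixed point $y$ is attached as a leaf to the root $f(y)$ (this is well-defined precisely because $f(f(y))=f(y)$, so $f(y)$ is always a fixed point, i.e.\ a root). Conversely, a height-at-most-one forest yields an idempotent by sending each root to itself and each leaf to its unique parent. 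The idempotency condition $f^2=f$ is exactly what guarantees the resulting graph has height at most one, and the two assignments are visibly mutually inverse, so this is a genuine bijection, not merely an equality of cardinalities.

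\textbf{Assembly.} Once both bijections are in hand, I would conclude by composing them: transporting a height-at-most-one forest to its idempotent function and then applying $\Gamma$ produces a complete exceptional sequence, and the reverse composition $\Phi$ followed by the forest construction returns the original forest. Since a composition of bijections is a bijection, all three sets are pairwise in one-to-one correspondence, which is precisely the claimed equivalence. The enumerative formula $\sum_{j=1}^{n}\binom{n}{j}j^{n-j}$ then follows immediately, since it was already derived for idempotent functions.

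\textbf{The main obstacle.} There is no substantive analytic or algebraic difficulty here, since the hard work was done in Theorem~\ref{maintheorem} and in \cite{idempotent}; the only point requiring care is \emph{honesty about what ``equivalent'' means}. The source \cite{idempotent} may only assert that the two counts agree, so the one step I would take pains to verify explicitly is that the forest-to-function correspondence described above is well-defined in both directions and truly inverse, so that the word ``equivalent'' in the theorem can be read as ``in natural bijection'' rather than the weaker ``equinumerous.'' This is routine but is the only place where an argument, as opposed to a citation, is needed.
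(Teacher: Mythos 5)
Your proposal is correct and follows essentially the same route as the paper: the paper also obtains the three-way equivalence by combining Theorem~\ref{maintheorem} with the idempotent-function/forest correspondence cited from \cite{idempotent} and \cite{forests}. In fact you are slightly more careful than the paper itself, which only cites the references for the forest--function step, whereas you spell out the fixed-points-as-roots bijection explicitly (the paper gestures at the same dictionary only informally, after the theorem, when it says roots correspond to simple modules and leaves to chain elements).
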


One can construct a direct bijection between the first and the last items, however we want to emphasis another fact by using forests: it is relatively simpler to encode structure of the exceptional sequences by using structure of forests in the following way:
\begin{enumerate}[label=\roman*)]
\item Roots correspond to simple modules in the exceptional sequence.
\item The number of leaves coming out each distinct root corresponds to chain involving that root. 
\end{enumerate}

\begin{example} If $n=3$, there are $3$ unlabeled forests:
\begin{center}
\begin{tikzpicture}
\draw (0,1) node{$\bullet$} (0.7,1) node{$\bullet$} (1.4,1) node{$\bullet$} ;
\draw[dashed] (2,1.5)--(2,-0.5);
\draw (2.8,1) node{$\bullet$} --(2.8,0) node{$\bullet$} (3.5,1) node{$\bullet$};
\draw[dashed] (4,1.5)--(4,-0.5);
\draw (5.3,0) node {$\bullet$}--(6,1) node{$\bullet$}--(6.7,0) node{$\bullet$};
\end{tikzpicture}
\end{center}
When $n=4$, there are $5$ unlabeled forests of height at most one. 
\begin{center}
\begin{tikzpicture}
\draw[dashed] (3.5,1.5)  -- (3.5,-0.5);
\draw[dashed] (6.5,1.5) -- (6.5,-0.5) ;
\draw[dashed] (8.5,1.5) -- (8.5,-0.5);
\draw[dashed] (12.5,1.5)  -- (12.5,-0.5) ;

\draw (0,1) node {$\bullet$};
\draw (1,1) node {$\bullet$};
\draw (2,1) node {$\bullet$};
\draw (3,1) node {$\bullet$};
\draw (4,1) node {$\bullet$}--(4,0) node {$\bullet$};
\draw (5,1) node {$\bullet$};
\draw (6,1) node {$\bullet$};

\draw (7,1) node {$\bullet$}--(7,0) node {$\bullet$};
\draw (8,1) node {$\bullet$}--(8,0) node {$\bullet$};
\draw (9,0) node {$\bullet$}--(10,1) node {$\bullet$}--(11,0) node {$\bullet$};
\draw (12,1) node {$\bullet$};
\draw (13,0) node {$\bullet$}--(14,1) node {$\bullet$};
\draw (14,0) node {$\bullet$}--(14,1) node {$\bullet$};
\draw (15,0) node {$\bullet$}--(14,1) node {$\bullet$};
\end{tikzpicture}
\end{center}
\end{example}

For instance, both exceptional sequences $([2],[1,2],[4],[3,4])$ and $([1],[4],[3,4],[2,3])$ have two simple modules. However, their forest types are 3rd and 4th respectively because length of the chains are different.

\subsection{Future Directions}
We recall the result of Seidel \cite{seidel}, which states that the number of exceptional sequences of $\mathbb{A}_n$ type is given by $(n+1)^{n-1}$. Indeed it is the number of all rooted labeled forests with $n$ labels, known as Cayley's formula \cite{stanley}. By that result and results of this paper we consider the following:
\begin{question} Is it possible to interpret complete exceptional sequences of connected linear Nakayama algebras of rank $n$ by certain rooted labeled forests with $n$ labels?
\end{question}
\bibliographystyle{alpha}
\newcommand{\etalchar}[1]{$^{#1}$}

\newpage
\voffset -4cm
\textheight 22.5truecm

\begin{align*}
\underline{Exceptional\quad\!\! Seq.\quad\!\! of\quad\!\! \Lambda_4} &\hspace{2cm} \underline{\Phi(E)}\\
[[1],[2,3],[2],[4]] &\hspace{2cm}(1,3,3,4)\\
[[3,4],[3],[2,3],[1,2]] &\hspace{2cm}(2,2,2,2)  \\
[[3,4],[2],[3],[1,2]]  &\hspace{2cm}(3,2,3,2) \\
[[3,4],[2],[1,2],[3]]&\hspace{2cm}(4,2,2,4)  \\
[[3,4],[1],[3],[2,3]]&\hspace{2cm}(3,2,3,3)  \\
[[3,4],[1],[2],[3]]&\hspace{2cm}(4,2,3,4)  \\
[[3,4],[1],[2,3],[2]]&\hspace{2cm}(4,2,4,4)  \\
[[3,4],[2,3],[2],[1,2]]&\hspace{2cm}(3,3,3,3)  \\
[[3,4],[2,3],[1],[2]]&\hspace{2cm}(4,4,3,4)  \\
[[2],[4],[1,2],[3,4]]&\hspace{2cm}(1,2,1,2)  \\
[[2],[4],[3,4],[1,2]] &\hspace{2cm}(1,2,2,1) \\
[[3],[2,3],[1,2],[4]]&\hspace{2cm}(1,1,1,4)  \\
[[3],[2,3],[4],[1,2]] &\hspace{2cm}(1,1,3,1) \\
[[3],[4],[2,3],[1,2]]  &\hspace{2cm}(1,2,1,1)\\
[[4],[3,4],[2,3],[1,2]] &\hspace{2cm}(1,1,1,1) \\
[[1],[3,4],[2,3],[2]] &\hspace{2cm}(1,4,4,4) \\
[[1],[3,4],[2],[3]]  &\hspace{2cm}(1,4,3,4)\\
[[1],[3,4],[3],[2,3]] &\hspace{2cm}(1,3,3,3) \\
[[1],[2],[3,4],[3]] &\hspace{2cm}(1,2,4,4) \\  
[[1],[2],[4],[3,4]] &\hspace{2cm}(1,2,3,3) \\
[[1],[3],[2,3],[4]] &\hspace{2cm}(1,2,2,4) \\
[[1],[3],[4],[2,3]]  &\hspace{2cm}(1,2,3,2)\\
[[1],[4],[3,4],[2,3]] &\hspace{2cm}(1,2,2,2) \\
[[2],[1,2],[3,4],[3]] &\hspace{2cm}(1,1,4,4) \\
[[2],[1,2],[3],[4]] &\hspace{2cm}(1,1,3,4) \\
[[2],[3,4],[1,2],[3]] &\hspace{2cm}(1,4,1,4) \\
[[2],[3,4],[3],[1,2]] &\hspace{2cm}(1,3,3,1) \\
[[2],[3],[1,2],[4]] &\hspace{2cm}(1,2,1,4) \\
[[2],[3],[4],[1,2]]  &\hspace{2cm}(1,2,3,1)\\
[[3,4],[2,3],[1,2],[1]] &\hspace{2cm}(4,4,4,4) \\
[[3,4],[1,2],[1],[3]] &\hspace{2cm}(4,3,3,4) \\
[[2,3],[2],[4],[1,2]] &\hspace{2cm}(2,2,3,2) \\
[[2,3],[2],[1,2],[4]] &\hspace{2cm}(2,2,2,4) \\
[[2,3],[1],[2],[4]] &\hspace{2cm}(3,2,3,4) \\
[[2,3],[1,2],[1],[4]] &\hspace{2cm}(3,3,3,4) \\
[[1,2],[1],[4],[3,4]]  &\hspace{2cm}(2,2,3,3)\\
[[1,2],[1],[3],[4]] &\hspace{2cm}(2,2,3,4) \\
[[1,2],[1],[3,4],[3]] &\hspace{2cm}(2,2,4,4) \\
[[1,2],[3,4],[1],[3]] &\hspace{2cm}(3,4,3,4) \\
[[1],[2],[3],[4]] &\hspace{2cm}(1,2,3,4) \\
[[2],[1,2],[4],[3,4]] &\hspace{2cm}(1,1,3,3) \\
\end{align*}

\end{document}